\newlength{\defbaselineskip}
\newcommand{\setlinespacing}[1]%
           {\setlength{\baselineskip}{#1 \defbaselineskip}}
\newcommand{\bq }{\begin{equation}}
\newcommand{\eq }{\end{equation}}
\newcommand{\bbb }{\begin{eqnarray}}
\newcommand{\eee }{\end{eqnarray}}
\newcommand{\bb }{\begin{eqnarray*}}
\newcommand{\ee }{\end{eqnarray*}}
\newcommand{\ed }{\end{document}}
\newtheorem{thm}{Theorem}[section]
\newtheorem{Definition}[thm]{Definition}
\newtheorem{cor}[thm]{Corollary}
\newtheorem{lem}[thm]{Lemma}
\newtheorem{exa}[thm]{Example}
\numberwithin{equation}{section}
\date{2017}
\begin{document}
	\begin{center}
		\large\textbf{ On centrally-extended Jordan endomorophisms in rings} 
	\end{center}
	\centerline {\Large \bf Aziza Gouda and H. Nabiel}
	\vskip .7em
	\centerline {Department of Mathematics, Faculty of Science, Fayoum University, Fayoum, Egypt }
	\centerline {Department of Mathematics, Faculty of Science,
		Al-Azhar University, Nasr City, Cairo, Egypt}
	\centerline{aga07@fayoum.edu.eg and hnabiel@yahoo.com}

	\begin{abstract}
	The aim of this article is to introduce the concept of centrally-extended Jordan endomorphisms and proving that if $R$ is a non-commutative prime ring of characteristic not two, and $G$ is a CE- Jordan epimorphism  such that $[G(x), x] \in Z(R)$ ($[G(x), x^*] \in Z(R)$) for all $x \in R$, then $R$ is an order in a central simple algebra of dimension at most $4$ over its center or there is an element $\lambda$ in the extended of $R$ such that $G(x) = \lambda x$ ($G(x) = \lambda^* x^*$) for all $x \in R$.
	\end{abstract}
	{\bf Mathematics Subject Classification (2020):}
	16N60, 16W10, 16U10.  \\ 
	{\bf Keywords:}  
	Derivations, Centrally-extended derivations, Endomorphisms, Centrally-extended endomorphisms, Centrally-extended Jordan endomorphisms, prime rings.
	
	\section{Introduction} \label{sect1}
	
	Throughout this paper, $R$ denotes an associative ring with center $Z(R)$,  $Q_{mr}(R)= Q$ denotes the maximal right ring of quotients of $R$  and the center of $Q_{mr}(R)$ is called the extended centroid of $R$ and denoted by $C$. By a ring with involution $*$, we mean a ring equipped with an involution $*$, it is also called $*$-ring. For $x, y \in R$ we have $[x, y] = xy - yx$ and $x \circ y= xy + yx$.
	In 2016, Bell and Daif \cite{Bell 2016} introduced the concept of centrally-extended derivations and centrally-extended endomorphisms on a ring $R$ with center $Z(R)$. A map $D$ on $R$ is called a centrally-extended derivation (CE-derivation) if $D(x + y) - D(x) - D(y)\in Z(R)$ and $D(xy) - D(x)y -xD(y)\in Z(R)$ for all  $x, y\in	R$. A map $T$ of $R$ is said to be a centrally-extended endomorphism (CE-endomorphism) if for each $x,y\in R$, $T(x + y)- T(x) - T(y) \in
	Z(R)$ and $T (xy)- T(x)T(y)\in Z(R)$. They showed that if $R$ is a semiprime ring with no nonzero central ideals, then every	CE-derivation is a derivation and every CE-epimorphism is an	epimorphism.  

	Through the last five years, the authors in [\cite{El-Sayiad 2016}, \cite{Deken 2019},\cite{Deken and soufi 2020}, \cite{ezzat 2021}, \cite{ezzat 2023}, \cite{ref 16} \cite{El Sofi 2022}, \cite{Ageeb 2022}, \cite{Khaled}] introduced the notions of CE-generalized $(\theta,	\phi)$-derivations,  CE- generalized *-derivations, CE-reverse derivations, CE-generalized reverse derivations, CE-higher derivation, CE-higher *derivation, CE-homoderivation, CE- $\alpha$-homoderivation, multiplicative CE-derivation and multiplicative generalized reverse *CE derivation and proved similar results.

	In \cite{ref 14} Bhushan et al. gave the notion of  a CE- Jordan derivation to be a mapping $D$ of $R$ satisfying $D(x + y) - D(x) - D(y) \in Z(R)$ and $D(x\circ y) - D(x)\circ y - x\circ D(y) \in Z(R)$ for all $x, y \in R$. Also, they gave the notion of a CE- Jordan *-derivation to be a mapping $D$ of a ring $R$ with involution $*$ satisfying $D(x + y) - D(x) - D(y) \in Z(R)$ and $D(x\circ y) - D(x)y^* - xD(y) - D(y)x^* - yD(x) \in Z(R)$ for all $x, y \in R$. They proved the following. Let $R$ be a  non-commutative prime ring(with involution $*$) of characteristic not 2. If $R$ admits a CE-Jordan derivation $d$ of $R$ such that $[d(x), x] \in Z(R)$ ($[d(x), x^*] \in Z(R)$) for all $x \in R$, then either $d = 0$ or $R$ is an order in a central simple algebra of dimension at most $4$ over its center. Also they obtained the same results in the case of CE- Jordan *-derivations.
	
	 Bhushan et al. in \cite{ref 15} introduced the notions of  CE- generalized derivation  Jordan derivation is a mapping $F$ of $R$ satiesfying $F(x + y) - F(x) - F(y) \in Z(R)$ and $F(x\circ y) - D(y)x - yD(x)\in Z(R)$ for all $x, y \in R$, where $D$ is a CE-Jordan derivation of $R$. They proved the following: (i) Let $R$ be a  non-commutative prime ring(with involution $*$) of characteristic not 2. If $R$ admits a CE-generalized derivation $F$ constrained with a CE-Jordan derivation $d$ of $R$ such that $[F(x), x] \in Z(R)$ ($[d(x), x^*] \in Z(R)$) for all $x \in R$, then $R$ is an order in a central simple algebra of dimension at most $4$ over its center or $F(x) = \lambda x$ for all $x \in R$, where is $\lambda \in C$. (ii) Let $R$ be a  non-commutative prime ring with involution $*$ and characteristic not 2. If $R$ admits a CE-generalized derivation $F$ constrained with a CE-Jordan derivation $d$ of $R$ such that $[F(x), x^*] \in Z(R)$) for all $x \in R$, then $R$ is an order in a central simple algebra of dimension at most $4$ over its center or $F = 0$.

	Our Objective in this article is to introduce the notion of CE- Jordan endomorphisms and prove the following results:
	
	(i) Let $R$ be a non-commutative prime ring of characteristic not two. If $R$ admits a CE- Jordan epimorphism $G$ such that $[G(x), x] \in Z(R)$ for all $x \in R$, then $R$ is an order in a central simple algebra of dimension at most $4$ over its center or $G(x) = \lambda x$ for all $x \in R$, where is $\lambda \in C$.
	
	(ii) Let $R$ be a  non-commutative prime ring of characteristic not two with involution $*$. If $R$ admits a CE- Jordan epimorphism $G$ such that $[G(x), x^*] \in Z(R)$ for all $x \in R$, then $R$ is an order in a central simple algebra of dimension at most $4$ over its center or $G(x) = \lambda^*x^*$ for all $x \in R$, where is $\lambda \in C$.	

\section{Preliminary results} \label{sect2}
	
The following lemmas help us to prove our results.

\begin{lem} \cite[Theorem 3.2]{ref 12} \label{lem2.2} Let $R$ be a prime ring. If  $g: R \longrightarrow R$ is an additive mapping such that $[g(x), x] = 0$ for all $x \in R$, then there exists $\lambda \in C$ and an additive mapping $\beta: R \longrightarrow C$, such that $g(x) = \lambda x + \beta(x)$ for all $x \in R$.
\end{lem}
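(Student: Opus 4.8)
The plan is to convert the commuting condition into a functional identity and then read off the conclusion from the theory of functional identities on prime rings. The guiding principle is that $g$ is only assumed additive, so the multiplicative structure cannot be exploited directly; instead the bilinearized hypothesis must be recognized as a standard functional identity.

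First I would linearize the hypothesis $[g(x),x]=0$. Replacing $x$ by $x+y$ and using additivity of $g$ together with $[g(x),x]=[g(y),y]=0$ gives the biadditive relation
\[ [g(x),y]+[g(y),x]=0 \quad\text{for all } x,y\in R, \]
which requires no restriction on the characteristic and is equivalent to $[g(x),y]=[x,g(y)]$. Expanding the commutators produces the two-variable functional identity
\[ g(x)y-yg(x)-xg(y)+g(y)x=0, \]
which is exactly of the shape treated by the fundamental theorem on functional identities.

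The main step, where essentially all the work sits, is to invoke that standard-solution theorem. Provided $R$ is not a PI-ring of low degree (specifically, not an order in a central simple algebra of dimension at most $4$ over $C$), every solution of an identity of this form is standard, and this forces $g(x)=\lambda x+\beta(x)$ for some $\lambda\in Q$ and some additive $\beta:R\to C$. Substituting this form back into the original hypothesis and using that $\beta(x)$ is central yields $[\lambda,x]x=0$ for all $x\in R$, and a routine primeness argument then upgrades $\lambda\in Q$ to $\lambda\in C$, giving the asserted conclusion.

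The remaining obstacle, which I expect to be the delicate part, is the excluded low-degree case where the functional-identity machinery does not apply: namely when $R$ is commutative or an order in a central simple algebra of dimension at most $4$ over $C$. In the commutative case the statement is immediate, taking $\lambda=0$ and $\beta=g$ (viewing $g$ as a map into $C=Q$). In the remaining matrix-like situation I would pass to the central closure and verify the standard form by a direct computation on matrix units, which is where the careful bookkeeping is needed.
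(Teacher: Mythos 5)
The paper does not actually prove this lemma; it quotes it verbatim from Bre\v{s}ar's ``Centralizing mappings and derivations in prime rings'' (Theorem 3.2 there), so the relevant comparison is with that standard proof. Your functional-identity route is a legitimate modern way to obtain the conclusion \emph{away from low degree}, but as written it has a genuine gap exactly where you flag ``careful bookkeeping'': the case where $R$ is an order in a central simple algebra of dimension at most $4$ over $C$. The lemma is asserted for \emph{every} prime ring, and in this paper it is applied before the dichotomy with the $s_4$ case is resolved, so that case cannot be discarded. Your proposed fix --- pass to the central closure and compute on matrix units --- does not go through: $g$ is only additive, not $C$-linear, so it neither extends canonically to $RC$ nor is it determined by its values on matrix units (think of additive, non-$K$-linear maps on $M_2(K)$). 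Deferring the entire excluded case to an unspecified computation that cannot in fact be a matrix-unit computation is a missing step, not bookkeeping.

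The standard argument avoids the degree restriction altogether and is worth knowing: from the linearized identity $[g(x),y]=[x,g(y)]$, the biadditive map $B(x,y)=[g(x),y]$ is a \emph{biderivation} --- it is an inner derivation in $y$ for fixed $x$, and the identity lets you write $B(x,y)=[x,g(y)]$, which is an inner derivation in $x$ for fixed $y$. Bre\v{s}ar's biderivation theorem then gives $B(x,y)=\lambda[x,y]$ for some $\lambda\in C$ on any noncommutative prime ring, with no PI or characteristic hypothesis; hence $g(x)-\lambda x$ centralizes $R$ and therefore lies in $C$, and $\beta(x):=g(x)-\lambda x$ is the required additive central map. The commutative case is trivial, as you note. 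If you want to keep the functional-identity framing, you must either supply a genuine argument for orders in $4$-dimensional central simple algebras or replace the main step by the biderivation argument, which renders the case split unnecessary.
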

\begin{lem} \cite[Lemma 1]{ref 13} \label{lem2.3} Let $R$ be a prime ring with extended centroid $C$. Then the following statements are equivalent:
	(i) $R$ satisfies $s_{4}$.
	(ii) $R$ is commutative or $R$ embeds into $M_{2} (K)$, for a field $K$.
	(iii) $R$ is algebraic of bounded degree $2$ over $C$.
	(iv) $R$ satisfies $[[x^2, y], [x, y]]$.
\end{lem}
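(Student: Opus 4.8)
The plan is to view all four conditions as manifestations of a single structural fact and to prove them equivalent by routing through the central closure. Since $R$ is prime, the extended centroid $C$ is a field and $Q = RC$ is a centrally closed prime $C$-algebra; moreover $R$ and $Q$ satisfy exactly the same ordinary polynomial identities, and each of the four listed properties transfers between $R$ and $Q$. So I would first reduce every statement to $Q$ and then show that each of (i)--(iv) is equivalent to the assertion that $Q$ is a central simple algebra of dimension at most $4$ over $C$, i.e. that $R$ has PI-degree at most $2$.

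For (ii) $\Rightarrow$ (i) I would invoke the Amitsur--Levitzki theorem, which gives that $M_2(K)$ satisfies $s_4$; being an ordinary identity, $s_4$ is inherited by every subring, and a commutative ring satisfies $s_4$ because all monomials coincide while $\sum_{\sigma\in S_4}\operatorname{sgn}(\sigma)=0$. For the converse (i) $\Rightarrow$ (ii) the essential tools are Kaplansky's theorem and Posner's theorem: a prime ring satisfying $s_4$ is a prime PI-ring whose central closure $Q$ is a finite-dimensional central simple $C$-algebra, and since $s_4$ has degree $4$ the PI-degree is at most $2$, so $\dim_C Q\le 4$. If $\dim_C Q=1$ then $Q=C$ and $R$ is commutative; if $\dim_C Q=4$ then $Q$ is split by some field $K$, that is $Q\otimes_C K\cong M_2(K)$, and $R\hookrightarrow Q\hookrightarrow M_2(K)$.

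To bring in (iii), I would note that when $\dim_C Q = 4$ the algebra $Q$ has degree $2$, so every element satisfies its reduced characteristic polynomial, a monic degree-$2$ polynomial with coefficients in $C$; hence $R\subseteq Q$ is algebraic of bounded degree $2$ over $C$ (the commutative case being trivial). Conversely, bounded algebraicity of degree $2$ forces $Q$ to be a PI-algebra of PI-degree at most $2$, returning to (i). The equivalence with (iv) rests on the algebraic identity
\[
[[x^2,y],[x,y]] = [x,[x,y]^2],
\]
which I would obtain by writing $[x^2,y]=x[x,y]+[x,y]x$ and expanding. In $M_2$ the commutator $[x,y]$ is traceless, so by Cayley--Hamilton $[x,y]^2$ is a scalar and the right-hand side vanishes, giving (ii) $\Rightarrow$ (iv). For the reverse, the displayed polynomial is a nonzero ordinary identity, so a prime ring satisfying it is PI; exhibiting explicit matrices in $M_3(K)$ that violate it rules out PI-degree $\ge 3$ and leaves PI-degree at most $2$, i.e. (i).

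The hard part will be the passage through the structure theory: making precise, via the central closure together with the theorems of Kaplansky and Posner, that each of these polynomial-identity and algebraicity conditions pins the PI-degree to at most $2$, and carrying out the bookkeeping that lets dimension statements over $C$ survive the scalar extension to a splitting field $K$. By contrast the computational inputs---the Amitsur--Levitzki fact, the identity $[[x^2,y],[x,y]]=[x,[x,y]^2]$, the tracelessness of commutators in $M_2$, and the $M_3$ counterexample---are routine once this framework is set up.
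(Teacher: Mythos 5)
The paper does not prove this lemma at all: it is imported verbatim as \cite[Lemma 1]{ref 13} (Bre\v{s}ar's commuting-traces paper), where it is itself assembled from classical results of Amitsur--Levitzki, Kaplansky, Posner and Rowen. So there is no in-paper argument to compare against; what you have written is, in outline, the standard proof from the literature, and it is essentially sound: the reduction to the central closure $Q=RC$, the Amitsur--Levitzki direction for (ii)$\Rightarrow$(i), the Posner/Kaplansky dimension count $\dim_C Q=n^2$ with $2n\le\deg f$ for the converse, the reduced characteristic polynomial for (iii), and the identity $[[x^2,y],[x,y]]=[x,[x,y]^2]$ (which I checked: with $c=[x,y]$ one has $[xc+cx,c]=xc^2-c^2x$) together with Cayley--Hamilton for (ii)$\Rightarrow$(iv).

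Three points in your outline need shoring up. First, the opening claim that $R$ and $Q$ satisfy ``exactly the same ordinary polynomial identities'' is a theorem about prime \emph{PI} rings; you cannot invoke it before you know $R$ is PI, so it should be deployed only after Posner--Rowen has been applied in each forward direction, and the direction starting from (iii) needs an independent entry point into PI theory. For that, there is a one-line shortcut you missed: if every $x$ satisfies a monic quadratic over $C$ then $x^2\in Cx+C$, so $[x^2,y]=\mu(x)[x,y]$ with $\mu(x)\in C$ and $[[x^2,y],[x,y]]=0$ immediately, giving (iii)$\Rightarrow$(iv) without Jacobson's theorem on algebraic algebras of bounded degree. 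Second, in (iv)$\Rightarrow$(i) your plan to test the identity on $M_3(K)$ after extending scalars is delicate because $[[x^2,y],[x,y]]$ is not multilinear and non-multilinear identities need not survive scalar extension in positive characteristic; it is cleaner to note the polynomial is nonzero in $\mathbb{Z}\langle x,y\rangle$ with a coefficient $\pm1$ and of degree $5$, so Kaplansky's bound gives $n\le\lfloor 5/2\rfloor=2$ directly (or multilinearize first and then use your $M_3$ witness, e.g. $x=E_{12}+E_{23}$, $y=E_{21}$, which gives $[x,[x,y]^2]=-E_{23}\ne0$). Third, minor bookkeeping: $\dim_C Q$ is a perfect square, so ``at most $4$'' means $1$ or $4$, and the embedding $Q\hookrightarrow Q\otimes_CK\cong M_2(K)$ uses that $K$ is free over $C$. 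None of these is a gap in the mathematics, only in the write-up.
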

In the following lemma, we generalize \cite[Proposition 3.1]{ref 12} to use it in our results.
\begin{lem}\label{lem3.3}
	Let $R$ be a $2$-torsion free semiprime ring with a nonzero map $\alpha$: $R \longrightarrow R$ and $\alpha(x + y) - \alpha(x) - \alpha(y) \in Z(R)$. If $[\alpha(x), x] \in Z(R)$ for all $ x \in R$, then $[\alpha(x), x] = 0$ for all $x \in R$.
\end{lem}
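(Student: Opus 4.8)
The plan is to linearise the centralizing hypothesis and thereby reduce to the situation of a genuinely additive map, where the statement is classical (it is the content of \cite[Proposition 3.1]{ref 12}, which this lemma generalises). First I would substitute $x+y$ for $x$ in $[\alpha(x),x]\in Z(R)$ and write $\alpha(x+y)=\alpha(x)+\alpha(y)+\mu(x,y)$ with $\mu(x,y)\in Z(R)$. Because $\mu(x,y)$ is central it is annihilated by the outer commutator, so expanding and cancelling the two diagonal terms $[\alpha(x),x]$ and $[\alpha(y),y]$ (which already lie in $Z(R)$) leaves the symmetric, biadditive identity
\[
[\alpha(x),y]+[\alpha(y),x]\in Z(R)\qquad\text{for all }x,y\in R.
\]
This is precisely the identity produced by an honest additive centralizing map; the cocycle $\mu$ has vanished. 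I would also record $f(x):=[\alpha(x),x]\in Z(R)$ and the antisymmetry $[\alpha(x),y]\equiv-[\alpha(y),x]\pmod{Z(R)}$.

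From this point I would follow the argument of \cite[Proposition 3.1]{ref 12} essentially verbatim. The observation that licenses this transfer is that that proof never controls the map on products: additivity gives no information on $\alpha(uv)$ any more than our weaker hypothesis does, so the proof proceeds solely from the two bracket relations above together with the Leibniz rule $[a,uv]=[a,u]v+u[a,v]$, the Jacobi identity, $2$-torsion-freeness, and semiprimeness. Running those manipulations with $\alpha$ in place of the additive map, and discarding the factors of $2$ generated by the symmetric linearisation (e.g.\ $[\alpha(x),x]+[\alpha(x),x]$) via $2$-torsion-freeness, should deliver the key relation $f(x)\,r\,f(x)=0$ for all $r\in R$ (the central factor $f(x)$ multiplied into a commutator expression that the linearised identity forces to vanish).

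To finish I would invoke semiprimeness directly: from $f(x)Rf(x)=0$ the defining property $aRa=0\Rightarrow a=0$ gives $f(x)=0$. Equivalently, since $f(x)$ is central one has $f(x)Rf(x)=f(x)^2R$, so it suffices to reach $f(x)^2=0$ and then use that the centre of a semiprime ring has no nonzero nilpotents (if $z\in Z(R)$ and $z^2=0$ then $zRz=z^2R=0$, whence $z=0$). In the prime case relevant to the main theorems this last step is even cheaper, as $Z(R)$ is then an integral domain. Either way $[\alpha(x),x]=0$ for all $x$.

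The main obstacle is entirely one of bookkeeping the cocycle $\mu$: the generalisation could only fail if some term $\alpha(u+v)-\alpha(u)-\alpha(v)$ escaped from inside a commutator and survived as a genuine non-central contribution. The discipline preventing this is to commute before multiplying and never to apply $\alpha$ to a product or to an unreduced sum, so that every intermediate expression is built solely from $f(\cdot)$, brackets $[\alpha(\cdot),\cdot]$, and central coefficients; under that invariant each occurrence of $\mu$ either sits inside a commutator or multiplies one and therefore drops out. Verifying that this invariant is preserved at each step of the borrowed computation is where the (modest) real work of the lemma lies.
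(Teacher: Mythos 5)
Your proposal matches the paper's proof in both strategy and substance: the paper linearizes exactly as you describe (the central cocycle dropping out of the outer commutator to leave $[\alpha(x),y]+[\alpha(y),x]\in Z(R)$), then runs the Bresar-style computation with $y=x^2$ using only the two bracket relations and repeated commutation against $\alpha(x)$, arriving at $8[\alpha(x),x]^3=0$, and finishes with $2$-torsion-freeness plus semiprimeness. The only cosmetic difference is that the explicit computation lands on $[\alpha(x),x]^3=0$ rather than your predicted $f(x)Rf(x)=0$, but your ``no nonzero central nilpotents in a semiprime ring'' fallback covers precisely this ending.
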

\begin{proof}
	By our hypothesis, we have
	\begin{equation}\label{eq35}
		[\alpha(x), x] \in Z(R)  \: \textit{ for all } \: x \in R. 
	\end{equation}
	Linearizing (\ref{eq35}) implies that
	\begin{equation}\label{eq36}
		[\alpha(x), y] + [\alpha(y), x] \in Z(R)  \: \textit{ for all } \: x, y \in R. 
	\end{equation}
	Putting $x^2$ instead of $y$ in (\ref{eq36}) and using (\ref{eq35}) yield  
	\begin{equation}\label{eq37}
		2[\alpha(x), x]x + [\alpha(x^2), x] \in Z(R)  \: \textit{ for all } \: x \in R. 
	\end{equation}
	From (\ref{eq35}) and (\ref{eq37}), we obtain
	\begin{equation*}
		\begin{split}
			0 = &[\alpha(x), 2[\alpha(x), x]x + [\alpha(x^2), x]] = 2[\alpha(x), x]^2 + [\alpha(x), [\alpha(x^2), x]] \: \textit{ for all } \: x \in R. 
		\end{split}
	\end{equation*}
	From (\ref{eq35}), we have $[\alpha(x^2), x^2] = [\alpha(x^2), x]x + x[\alpha(x^2), x]\in Z(R)$ for all $x\in Z(R)$
	Thus,
	\begin{equation} \label{eq38}
		\begin{split}
			0 = &[\alpha(x), [\alpha(x^2), x]x + x[\alpha(x^2), x]] = 2[\alpha(x), x][\alpha(x^2), x] + [\alpha(x), [\alpha(x^2), x]]x \\&+ x[\alpha(x), [\alpha(x^2), x]] = 2[\alpha(x), x][\alpha(x^2), x] - 4x[\alpha(x), x]^2\: \textit{ for all } \: x \in R. 
		\end{split}
	\end{equation}
	So, we find
	\begin{equation} \label{eq40}
		0 = [\alpha(x),  2[\alpha(x), x][\alpha(x^2), x] - 4x[\alpha(x), x]^2] = - 8[\alpha(x), x]^3 \: \textit{ for all } \: x \in R. 
	\end{equation}	
	The two torsion freeness of $R$ and the semiprimeness imply that $[\alpha(x), x] = 0$  for all  $x \in R$. 	
\end{proof}
By the following lemma, we say \cite[Proposition 2.2]{ref 11} is true when we take $\alpha$: $R \longrightarrow R$ such that $\alpha(x + y) - \alpha(x) - \alpha(y) \in Z(R)$ for all $ x, y \in R$ instead of $\alpha$ is additive. Also, we use it in proving of our results.
\begin{lem}\label{lem3.4}
	Let $R$ be a $2$-torsion free semiprime ring admitting an involution $*$. If $\alpha$: $R \longrightarrow R$ a nonzero map, $\alpha(x + y) - \alpha(x) - \alpha(y) \in Z(R)$ for all $ x, y \in R$ and $[\alpha(x), x^*] \in Z(R)$ for all $x \in R$, then $[\alpha(x), x^*] = 0$ for all $x \in R$.
\end{lem}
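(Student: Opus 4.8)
The plan is to run the argument of Lemma~\ref{lem3.3} essentially verbatim, with the single change that the second slot of every commutator carries a $*$, and to rely on the fact that $*$ is an additive anti-automorphism so that $(x^2)^* = (x^*)^2$. Write $c := [\alpha(x), x^*]$, which lies in $Z(R)$ by hypothesis. First I would linearize: replacing $x$ by $x+y$ and using $(x+y)^* = x^* + y^*$ together with $\alpha(x+y) - \alpha(x) - \alpha(y) \in Z(R)$ (the central correction term drops out of the commutator), I obtain $[\alpha(x), y^*] + [\alpha(y), x^*] \in Z(R)$ for all $x,y \in R$, the exact analogue of the linearized identity in Lemma~\ref{lem3.3}.

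Next I would specialize $y = x^2$. Using $(x^2)^* = (x^*)^2$ and the centrality of $c$ to collapse $[\alpha(x), (x^*)^2] = [\alpha(x), x^*]x^* + x^*[\alpha(x), x^*] = 2cx^*$, this yields $2cx^* + [\alpha(x^2), x^*] \in Z(R)$. Commuting this central element with $\alpha(x)$ and setting $b := [\alpha(x^2), x^*]$ gives $0 = 2c^2 + [\alpha(x), b]$, so that $[\alpha(x), b] = -2c^2$ is itself central. I would then apply the hypothesis at $x^2$: $[\alpha(x^2), (x^*)^2] = bx^* + x^* b \in Z(R)$. Commuting $bx^* + x^* b$ with $\alpha(x)$, expanding by the Leibniz rule for the commutator, and using that both $c$ and $[\alpha(x),b] = -2c^2$ are central, the mixed terms collapse to $0 = 2cb - 4x^* c^2$, the analogue of the corresponding identity in Lemma~\ref{lem3.3}. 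A final commutation of this expression with $\alpha(x)$ then produces $0 = -8c^3$, i.e. $8\,[\alpha(x), x^*]^3 = 0$ for all $x \in R$.

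To finish, $2$-torsion freeness removes the factor $8$, leaving $[\alpha(x), x^*]^3 = 0$. Since $c = [\alpha(x), x^*]$ is central and $R$ is semiprime, a central nilpotent must vanish: from $c^3 = 0$ one gets $c^2 R c^2 = R c^4 = 0$, hence $c^2 = 0$ by semiprimeness, and then $cRc = R c^2 = 0$ forces $c = 0$. Therefore $[\alpha(x), x^*] = 0$ for all $x \in R$, as claimed.

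The computation is routine, so the only real point of care --- and what I would flag as the place most likely to go wrong --- is the bookkeeping forced by the anti-multiplicativity of $*$: one must verify that each expansion really does involve only powers of the single element $x^*$ in the commutators' second arguments, so that the reversal of products in $*$ never mixes incompatible factors. Precisely because $x^*$ and its powers commute among themselves, the anti-automorphism does not obstruct the parallel with Lemma~\ref{lem3.3}, and every collapse that used commutativity of $x$ with itself there is matched here by commutativity of $x^*$ with itself.
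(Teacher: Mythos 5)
Your proof is correct: every step of the commutator computation checks out (the linearization, the substitution $y=x^2$ with $(x^2)^*=(x^*)^2$, the two commutations with $\alpha(x)$ yielding $-8c^3=0$, and the removal of the central nilpotent by semiprimeness), and your closing remark about why the anti-multiplicativity of $*$ causes no trouble is exactly the right point to worry about. However, the paper takes a much shorter route that you missed: it simply defines $\gamma(x) := \alpha(x^*)$, observes that replacing $x$ by $x^*$ in the hypothesis gives $[\gamma(x),x]=[\alpha(x^*),x]\in Z(R)$ and that $\gamma$ inherits the almost-additivity condition from $\alpha$ (since $*$ is additive), and then invokes Lemma~\ref{lem3.3} directly to get $[\gamma(x),x]=0$, i.e.\ $[\alpha(x^*),x]=0$ for all $x$, which upon substituting $x^*$ for $x$ is the desired conclusion. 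In other words, the involution lets you transport the $*$-version of the statement onto the already-proved version by precomposition, so the entire computation you reproduce is subsumed by the earlier lemma. Your version buys nothing extra here beyond self-containedness; the reduction is preferable because it makes clear that no new phenomenon arises in the $*$-case and avoids duplicating a delicate calculation.
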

\begin{proof}	
	By our assumpation, we have $[\alpha(x), x^*] \in Z(R)$ for all $ x \in R$. Putting $x^*$ instead of $x$, we get $[\alpha(x^*), x] \in Z(R)$ for all $ x \in R$. Define a map $\gamma$: $R \longrightarrow R$ such that $\gamma(x) = \alpha(x^*)$ for all $x \in R$. We note $\gamma(x + y) - \gamma(x) - \gamma(y)\in Z(R)$ for all $x \in R$ and $[\gamma(x), x] \in Z(R)$ for all $x \in R$. Lemma (\ref{lem3.3}) gives $[\gamma(x), x] = 0$ for all $x \in R$. This implies that $[\alpha(x^*), x]= 0$ for all $x \in R$, hence $[\alpha(x), x^*] = 0$ for all $x \in R$.
\end{proof}

\section{Centrally-extended  Jordan endomorphism} \label{sect3}
\begin{Definition}
 A map $G$: $R \longrightarrow R$ is called a centrally-extended  Jordan endomorphism (CE-Jordan endomorphism) if $G(x + y) - G(x) - G(y) \in Z(R)$ and $G(x \circ y) - G(x) \circ G(y) \in Z(R)$ for all $ x, y \in R$.\\
\end{Definition}

In the following examples, we make sure the existance of CE-Jordan endomorphisms.	
\begin{exa}\label{exa3.1}
	Suppose R = $M_{2} (\mathbb{Z})$ where $\mathbb{Z}$ is the ring of integers.   It is obvious that the mapping $G$: $R \longrightarrow R$ defined by $G$ $(\begin{matrix}
		a & b \, \\
		c & t \, \\
	\end{matrix}
	)$ = $(\begin{matrix}
		c & 0 \, \\
		0 & c \, \\
	\end{matrix})$ is a CE-Jordan endomorphism but not a Jordan endomorphism.
\end{exa}

\begin{exa}\label{exa3.2}
 Let R = $M_{2} (\mathbb{Z})$. The mapping $G$: $R \longrightarrow R$ defined by $G$$(\begin{matrix}
	a & b \, \\
	c & t \, \\
\end{matrix}
)$ = $(\begin{matrix}
	t & -b \, \\
	-c & a \, \\
\end{matrix})$ is a CE-Jordan endomorphism but not a CE-endomorphism.
\end{exa}




Example \ref{exa3.1} shows that Bell and Daif \cite[, Theorem 2.8]{Bell 2016} can not be extended to the case of CE-endomorphisms but fortunately, we proved the additive condition in the following lemma.

\begin{lem}\label{lem3.1}
Let $R$ be a $2$-torsion free ring with no nonzero central ideals. Then every  CE- Jordan epimorphism $G$ of $R$ is additive.
\end{lem}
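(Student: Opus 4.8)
The plan is to measure the failure of additivity by the central element $f(x,y) := G(x+y)-G(x)-G(y) \in Z(R)$ and to force it to vanish by playing it off against the Jordan-multiplicative defect $g(u,v) := G(u\circ v) - G(u)\circ G(v) \in Z(R)$. The whole strategy rests on the identity $(x+y)\circ z = x\circ z + y\circ z$, which lets me compute $G\big((x+y)\circ z\big)$ in two different ways and then compare.

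First I would expand $G\big((x+y)\circ z\big)$ using the Jordan condition: it equals $G(x+y)\circ G(z) + g(x+y,z)$, and substituting $G(x+y)=G(x)+G(y)+f(x,y)$ together with the centrality of $f(x,y)$ (so that $f(x,y)\circ G(z) = 2f(x,y)G(z)$) gives
\[
G\big((x+y)\circ z\big) = G(x)\circ G(z) + G(y)\circ G(z) + 2f(x,y)G(z) + g(x+y,z).
\]
On the other hand, applying the additive defect first and then the Jordan condition to each summand yields
\[
G\big((x+y)\circ z\big) = G(x)\circ G(z) + G(y)\circ G(z) + g(x,z) + g(y,z) + f(x\circ z,\,y\circ z).
\]
Equating the two and cancelling the common terms leaves
\[
2f(x,y)\,G(z) = g(x,z)+g(y,z)-g(x+y,z)+f(x\circ z,\,y\circ z),
\]
whose right-hand side lies in $Z(R)$. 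Hence $2f(x,y)G(z)\in Z(R)$ for all $x,y,z\in R$.

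Now I would invoke surjectivity: since $G$ is onto, $G(z)$ ranges over all of $R$, so writing $c:=f(x,y)\in Z(R)$ I get $2cr\in Z(R)$ for every $r\in R$. Because $c$ is central, $2cR$ is a two-sided ideal contained in $Z(R)$, i.e.\ a central ideal, so the hypothesis that $R$ has no nonzero central ideals forces $2cR=0$. Two-torsion freeness then upgrades this to $cR=0$, after which $\mathbb{Z}c$ is itself a central ideal (using $Rc=cR=0$); the hypothesis applied once more gives $c=0$. Thus $f(x,y)=0$ for all $x,y$, which is exactly additivity of $G$.

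I expect the only genuinely delicate point to be the two-way expansion in the second paragraph, in particular keeping track of which terms are central and using $f(x,y)\circ G(z)=2f(x,y)G(z)$ correctly; once the relation $2f(x,y)G(z)\in Z(R)$ is in hand, the steps with the central-ideal hypothesis and $2$-torsion freeness are routine. One should make sure surjectivity (not merely being a CE-Jordan endomorphism) is genuinely used, since that is where ``epimorphism'' enters: without it one could not let $G(z)$ sweep out all of $R$ and reduce $2f(x,y)G(z)\in Z(R)$ to a statement about a central ideal.
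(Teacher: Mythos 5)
Your proof is correct and follows essentially the same route as the paper's: both compute $G\big((x+y)\circ z\big)$ in two ways (the paper writes it as $G(w\circ(x+y))$) to extract $2f(x,y)G(z)\in Z(R)$, then use surjectivity of $G$ together with the no-nonzero-central-ideals hypothesis and $2$-torsion freeness to force $f(x,y)=0$. The only cosmetic difference is the order of the final cleanup: the paper cancels the $2$ first and then passes to the central ideal $a_1R$ and the annihilator $A(R)$, while you treat $2cR$ as the central ideal and divide by $2$ afterwards.
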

\begin{proof}
Since $G$ is a CE-endomorphism, then we have for $x, y, w \in R$
\begin{equation}\label{eq3.1}
		G(x + y) = G(x) + G(y) + a_{1},   \:   a_{1} \in Z(R).              
\end{equation}
And,
	\begin{equation}\label{eq3.2}
		G(w\circ(x + y)) = G(w)G(x + y) + G(x + y)G(w) + a_{2},    \:     a_{2} \in Z(R).            
	\end{equation}
	By (\ref{eq3.1}), we get 
	\begin{equation}\label{eq3.3}
		G(w\circ(x + y)) = G(w)(G(x) + G(y) + a_{1}) + (G(x) + G(y) + a_{1})G(w) + a_{2}.                    
	\end{equation}
	Also, we have
	\begin{equation}\label{eq3.4}
		\begin{split}   
			G(w\circ(x + y)) &= G((w\circ x) + (w\circ y)) = G(w\circ x) + G(w\circ y) + a_{3} \\&= G(w)G(x) + G(x)G(w) + a_{4} + G(w)G(y) + G(y)G(w)\\& + a_{5} + a_{3},   \:      a_{3}, a_{4}, a_{5} \in Z(R). 
		\end{split}                     
	\end{equation}
	Equation (\ref{eq3.3}) and (\ref{eq3.4}) imply that $2a_{1}G(w) + a_{2} = a_{3} + a_{4} + a_{5}$, i.e., $2a_{1}G(w) \in Z(R)$ for all $w \in R$.
	Since $R$ is $2$-torsion free, then $a_{1}G(w) \in Z(R)$ for all $w \in R$. Using $G$ is surjective gives $aw \in Z(R)$ for all $w \in R$. 	
	Thus, $a_{1}R$ is a central ideal, hence $a_{1}R = (0)$. Therefore, letting $A(R)$ be the two-sided annihilator of $R$, we have
	$a_{1} \in A(R)$. But $A(R)$ is a central ideal, so $a_{1} = 0$ and by (\ref{eq3.1}), we get our proof.
\end{proof}

\begin{cor}\label{cor3.4}
	Let $R$ be a non-commutative prime ring of characteristic not two. If $G$ is a CE-Jordan epimorphism  of $R$, then $G$ is additive.
\end{cor}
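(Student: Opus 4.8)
The plan is to deduce Corollary \ref{cor3.4} directly from Lemma \ref{lem3.1} by verifying that a non-commutative prime ring $R$ of characteristic not two satisfies the two hypotheses of that lemma, namely that $R$ is $2$-torsion free and that $R$ has no nonzero central ideals. Once both are established, the lemma applies without modification.

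First I would check $2$-torsion freeness. Set $T = \set{x \in R : 2x = 0}$; this is a two-sided ideal since $2(rx) = r(2x)$ and $2(xr) = (2x)r$. For $t \in T$ and $s \in R$ one computes $t(2s) = 2(ts) = (2t)s = 0$, so $T\cdot 2R = (0)$. Because $R$ is prime and $\mathrm{char}\,R \neq 2$ forces $2R \neq (0)$, primeness yields $T = (0)$, i.e. $R$ is $2$-torsion free.

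Next I would show that a non-commutative prime ring admits no nonzero central ideal. Suppose $I \subseteq Z(R)$ is an ideal with $0 \neq a \in I$. Then $ar \in I \subseteq Z(R)$ for every $r$, so $[ar, y] = 0$; since $a$ is central this reduces to $a[r, y] = 0$ for all $r, y \in R$. Replacing $r$ by $rs$ and expanding the commutator $[rs,y] = r[s,y] + [r,y]s$ gives $ar[s,y] = 0$, hence $aR[s, y] = (0)$ for all $s, y$. Primeness together with $a \neq 0$ then forces $[s, y] = 0$ for all $s, y$, contradicting non-commutativity; so no such nonzero $I$ exists.

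With both hypotheses secured, Lemma \ref{lem3.1} applies verbatim and yields that $G$ is additive. There is no genuine obstacle here beyond assembling these two standard facts about prime rings; the one point worth watching is that each reduction truly invokes \emph{primeness} (to pass from the annihilator relations $T\cdot 2R = (0)$ and $aR[s,y] = (0)$ to the stated conclusions) rather than mere semiprimeness, after which the corollary follows at once.
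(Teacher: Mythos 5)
Your proposal is correct and follows exactly the route the paper intends: the corollary is meant to be an immediate consequence of Lemma \ref{lem3.1}, and you supply the two standard verifications (a prime ring of characteristic not two is $2$-torsion free, and a non-commutative prime ring has no nonzero central ideals) that the paper leaves implicit. Both verifications are carried out correctly, with primeness invoked at the right points.
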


Now, we are ready to prove our first theorem.
\begin{thm}\label{thm 3.5}
	Let $R$ be a non-commutative prime ring of characteristic not two. If $R$ admits a CE- Jordan epimorphism $G$ such that $[G(x), x] \in Z(R)$ for all $x \in R$, then $R$ is an order in a central simple algebra of dimension at most $4$ over its center or $G(x) = \lambda x$ for all $x \in R$, where is $\lambda \in C$.
\end{thm}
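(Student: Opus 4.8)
The plan is to first reduce $G$ to a well-understood additive map and then extract a single clean identity from the Jordan condition. Since $R$ is a non-commutative prime ring of characteristic not two, Corollary~\ref{cor3.4} shows that $G$ is additive. As $G$ is a nonzero map (it is surjective) with $G(x+y)-G(x)-G(y)\in Z(R)$ and $[G(x),x]\in Z(R)$, Lemma~\ref{lem3.3} upgrades the hypothesis to $[G(x),x]=0$ for all $x$. Now Lemma~\ref{lem2.2} applies: there are $\lambda\in C$ and an additive map $\beta\colon R\to C$ with $G(x)=\lambda x+\beta(x)$ for all $x$. I would first record that $\lambda\neq 0$; otherwise $G(x)=\beta(x)\in C\cap R\subseteq Z(R)$, and surjectivity of $G$ would force $R=Z(R)$, contradicting non-commutativity.

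Next I would feed $G(x)=\lambda x+\beta(x)$ into the defining relation $G(x\circ y)-G(x)\circ G(y)\in Z(R)$. Because each $\beta$-value is central in $Q$, expanding the Jordan product gives $G(x)\circ G(y)=\lambda^2(x\circ y)+2\lambda\beta(y)x+2\lambda\beta(x)y+2\beta(x)\beta(y)$, so that, modulo the central element $\beta(x\circ y)-2\beta(x)\beta(y)\in C$, the quantity $(\lambda-\lambda^2)(x\circ y)-2\lambda\beta(y)x-2\lambda\beta(x)y$ lies in $Z(R)$. Commuting this with an arbitrary $z\in R$ annihilates every central contribution and yields $(\lambda-\lambda^2)[x\circ y,z]-2\lambda\beta(y)[x,z]-2\lambda\beta(x)[y,z]=0$ for all $x,y,z$. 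The decisive specialization is $z=x$: using $[x\circ y,x]=[y,x^2]$ and cancelling $\lambda\neq 0$, this collapses to $(1-\lambda)[y,x^2]=2\beta(x)[y,x]$, that is, $[\,y,\,(1-\lambda)x^2-2\beta(x)x\,]=0$ for every $y$. Since the centralizer of $R$ in $Q$ is $C$, I obtain the single clean relation $(1-\lambda)x^2-2\beta(x)x\in C$ for all $x\in R$.

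From here the dichotomy is immediate. If $\lambda\neq 1$, then dividing by $1-\lambda\in C$ shows that every $x$ satisfies $x^2=\frac{2\beta(x)}{1-\lambda}x+\delta(x)$ for some $\delta(x)\in C$; thus $R$ is algebraic of bounded degree $2$ over $C$, and Lemma~\ref{lem2.3} gives that $R$ satisfies $s_4$, hence is an order in a central simple algebra of dimension at most $4$ over its center. If instead $\lambda=1$, the relation reads $\beta(x)x\in C$ (characteristic not two); for non-central $x$ this forces $\beta(x)=0$, since otherwise $x=\beta(x)^{-1}(\beta(x)x)\in C\cap R\subseteq Z(R)$. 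Fixing one non-central $x_0$ and using additivity of $\beta$, every central $z$ satisfies $\beta(z)=\beta(x_0+z)-\beta(x_0)=0$, because $x_0+z$ is again non-central; hence $\beta\equiv 0$ and $G(x)=\lambda x$ with $\lambda=1\in C$.

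The main obstacle is organizational rather than computational: the Jordan identity initially produces a three-variable generalized identity, and the crux is to recognize that the substitution $z=x$ together with the identity $[x\circ y,x]=[y,x^2]$ compresses everything into the one-variable statement $(1-\lambda)x^2-2\beta(x)x\in C$. Once this is in hand, the split into ``$R$ is degree-$2$ algebraic'' versus ``$\beta\equiv 0$'' uses only that $C$ is a field and that $R$ possesses a non-central element.
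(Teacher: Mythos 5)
Your proof is correct, and its skeleton coincides with the paper's: Lemma~\ref{lem3.3} to upgrade the hypothesis to $[G(x),x]=0$, Corollary~\ref{cor3.4} for additivity, Lemma~\ref{lem2.2} for the decomposition $G(x)=\lambda x+\beta(x)$, and then a case split on $\lambda$. The identity you reach, $(1-\lambda)[y,x^2]=2\beta(x)[y,x]$, is (after multiplying by $-\lambda$) exactly the paper's relation $(\lambda^{2}-\lambda)[y,x^{2}]+2\lambda\beta(x)[y,x]=0$ from (\ref{eq3.9}); you obtain it by expanding the Jordan condition $G(x\circ y)-G(x)\circ G(y)\in Z(R)$ directly and then commuting with $x$, whereas the paper first linearizes $[G(x),x]=0$ and substitutes $x\circ y$ for $y$. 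Where you genuinely diverge is the endgame. The paper brackets its identity once more to produce $(\lambda^{2}-\lambda)[[x^{2},y],[y,x]]=0$ and invokes part (iv) of Lemma~\ref{lem2.3}, then in the case $\lambda^{2}=\lambda$ eliminates $\beta$ via the union-of-two-additive-subgroups argument applied to $\beta(x)[y,x]=0$. You instead package the identity as the one-variable statement $(1-\lambda)x^{2}-2\beta(x)x\in C$ (using that the centralizer of $R$ in $Q$ is $C$), so that $\lambda\neq 1$ yields ``algebraic of bounded degree $2$ over $C$,'' i.e.\ part (iii) of Lemma~\ref{lem2.3}, while $\lambda=1$ yields $\beta(x)x\in C$ and a pointwise elimination of $\beta$ that needs only the existence of one non-central element. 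Your route buys a cleaner finish --- no double commutator and no subgroup-union argument --- and your early observation that $\lambda\neq 0$ disposes at the outset of a case the paper only handles at the very end; both arguments rest on the same three lemmas and deliver the same dichotomy.
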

\begin{proof}
	 By our hypothsis and Lemma (\ref{lem3.3}), we obtain
	\begin{equation}\label{eq3.5}
		[G(x), x] = 0  \: \textit{ for all } x \in R.             
	\end{equation}
	Linearizing (\ref{eq3.5}) gives
	\begin{equation}\label{eq3.6}
		[G(x), y] + [G(y), x] = 0  \: \textit{ for all } x, y \in R.            
	\end{equation}
	Substituting $x \circ y$ for $y$ in (\ref{eq3.6}) gives 
	\begin{equation*}
		[G(x), x \circ y] + [G(x \circ y), x] = 0  \: \textit{ for all } x, y \in R.                   
	\end{equation*}
	Thus,
	\begin{equation}\label{eq3.7}
		\begin{split}   
			0& = [G(x), x \circ y] + [G(x)G(y) + G(y)G(x), x]\\& = [G(x), x \circ y] + G(x)[G(y), x] + [G(y), x]G(x)   \: \textit{ for all } x, y \in R.                   
		\end{split}                     
	\end{equation}
	Corollary (\ref{cor3.4}) implies that $G$ is additive, then (\ref{eq3.5}) and Lemma (\ref{lem2.2}) give $G(x) = \lambda x + \beta(x)$ for all $x \in R$ where $\beta: R \longrightarrow C$ is an additive mapping. By (\ref{eq3.7}), we arrive at	  
	\begin{equation}\label{eq3.8}
		\begin{split}   
			0& = [\lambda x + \beta(x), x \circ y] + (\lambda x + \beta(x))[\lambda y + \beta(y), x] + [\lambda y + \beta(y), x](\lambda x + \beta(x))\\&  \: \textit{ for all } x, y \in R.                   
		\end{split} 	
	\end{equation}	
	Since $\lambda$ and  $\beta(x) \in C$ for all $x \in R$, then
	\begin{equation}\label{eq3.9}
		\begin{split}   
			0& = \lambda[x, x \circ y] + \lambda^{2} x[y, x]  + \lambda^{2}[y, x]x + 2\lambda \beta(x)[y, x] \\& = \lambda[x^{2}, y] + \lambda^{2} [y, x^{2}] + 2\lambda \beta(x)[y, x]\\& = (\lambda^{2} - \lambda) [y, x^{2}] + 2\lambda \beta(x)[y, x]\: \textit{ for all } x, y \in R.                   
		\end{split} 	
	\end{equation}	
	So, we find
	\begin{equation*}
		0 = (\lambda^{2} - \lambda) [[x^{2}, y], [y, x]] \: \textit{ for all } x, y \in R.                   	
	\end{equation*}	
	The primeness of $R$ leads to the primeness of $Q$, then we get either $\lambda^{2} = \lambda$ or $[[x^{2}, y], [x, y]]  = 0$ for all  $x, y \in R$. By Lemma (\ref{lem2.3}), the second case is equivalent to the $s_{4}$ identity and $R$ is assumed to be non-commutative, therefore $R$ is an order in a central simple algebra of dimension at most $4$ over $Z(R)$. \\
	On the other hand, let us assume that $\lambda^{2} = \lambda$. Then from (\ref{eq3.9}), we have $2\lambda \beta(x)[y, x] = 0$ for all  $x, y \in R$. Since characteristic $R$ is not two, then characteristic $Q$ is not two and so, we find $\lambda \beta(x)[y, x] = 0$ for all  $x, y \in R$. The primeness of $Q$ gives $\lambda = 0$ or $\beta(x)[y, x] = 0$ for all  $x, y \in R$. 
	Assume $\lambda = 0$. Then $G(x) = \beta(x) \in C$ for all  $x \in R$. Since $G$ is surjective, then $R \subseteq C$, i.e., $R$ is a commutative ring, a contradiction. If $\beta(x)[y, x] = 0$ for all  $x, y \in R$, then the primeness of $Q$ implies that for each $x \in R$ either $\beta(x) = 0$ or $[y, x] = 0$ for all  $y \in R$. Write  $E_{1}  =\{x \in R: \beta(x) = 0\}$ and  $E_{2}  =\{x \in R: [y, x] = 0 \: \textit{ for all } y \in R\}$. Therefore, we note that $E_{1} $ and $E_{2}$ are  proper additive subgroups of $R$ and it is union equls $R$ which is impossible. Thus, either $R = E_{1} $ or $R = E_{2} $. It implies that either $\beta(x)= 0$ for all  $x \in R$ or $[y, x] = 0$ for all  $x, y \in R$.
	If $[y, x] = 0$ for all  $x, y \in R$, then $R$ is commutative, a contradiction. If  $\beta(x)= 0$ for all  $x \in R$, then  $G(x) = \lambda x$ for all  $x \in R$. 
\end{proof}

Now, we prove the second theorem that dealing with ring with involution.
\begin{thm}\label{thm 3.3}
	Let $R$ be a  non-commutative prime ring of characteristic not two with involution $*$. If $R$ admits a CE- Jordan epimorphism $G$ such that $[G(x), x^*] \in Z(R)$ for all $x \in R$, then $R$ is an order in a central simple algebra of dimension at most $4$ over its center or $G(x) = \lambda^*x^*$ for all $x \in R$, where is $ \lambda \in C$.
\end{thm}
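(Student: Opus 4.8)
The plan is to follow the same strategy as in Theorem~\ref{thm 3.5}, but to feed the hypothesis through the involution before invoking Lemma~\ref{lem2.2}. First, since $R$ is prime (hence $2$-torsion free semiprime) and $G$ is a nonzero map with $G(x+y)-G(x)-G(y)\in Z(R)$, Lemma~\ref{lem3.4} upgrades the hypothesis $[G(x),x^*]\in Z(R)$ to the identity $[G(x),x^*]=0$ for all $x\in R$. Applying the involution to this identity and using $[a,b]^*=[b^*,a^*]$ yields $[G(x)^*,x]=0$ for all $x\in R$. By Corollary~\ref{cor3.4} the map $G$ is additive, so $x\mapsto G(x)^*$ is additive as well; Lemma~\ref{lem2.2} then produces $\lambda\in C$ and an additive map $\beta\colon R\to C$ with $G(x)^*=\lambda x+\beta(x)$. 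Taking $*$ once more and using that $C$ is $*$-invariant gives the linear form $G(x)=\lambda^* x^*+\gamma(x)$, where $\gamma(x)=\beta(x)^*\in C$ is additive. This is where the conjugate $\lambda^*$ of the conclusion appears.

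Next I would linearize $[G(x),x^*]=0$ (using additivity of $G$) to obtain $[G(x),y^*]+[G(y),x^*]=0$, and then substitute $x\circ y$ for $y$. The identity $(x\circ y)^*=x^*\circ y^*$ together with the CE-Jordan condition $G(x\circ y)\equiv G(x)\circ G(y)\pmod{Z(R)}$ and the already-established relation $[G(x),x^*]=0$ (which makes the relevant Leibniz terms vanish) reduces this to $[G(x),x^*\circ y^*]+G(x)[G(y),x^*]+[G(y),x^*]G(x)=0$, the exact analogue of~(\ref{eq3.7}). Substituting $G(x)=\lambda^* x^*+\gamma(x)$ and writing $u=x^*$, $v=y^*$ (which again range over all of $R$), the central scalars $\gamma(x)$ commute out and the commutator identities $[u,u\circ v]=[u^2,v]$ and $u[v,u]+[v,u]u=[v,u^2]$ collapse everything to $(\lambda^*-(\lambda^*)^2)[u^2,v]+2\lambda^*\gamma(x)[v,u]=0$, which is precisely the analogue of~(\ref{eq3.9}) with $\lambda$ replaced by $\lambda^*$.

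From here the argument is word-for-word that of Theorem~\ref{thm 3.5}. Commuting the last relation with $[v,u]$ kills the central term and gives $(\lambda^*-(\lambda^*)^2)[[u^2,v],[u,v]]=0$ for all $u,v\in R$. Since primeness of $R$ forces $Q$ prime, either $(\lambda^*)^2=\lambda^*$ or $R$ satisfies the $s_4$ identity; in the latter case Lemma~\ref{lem2.3} and non-commutativity make $R$ an order in a central simple algebra of dimension at most $4$ over its center. If instead $(\lambda^*)^2=\lambda^*$, then $2\lambda^*\gamma(x)[v,u]=0$, and $\mathrm{char}\,R\neq2$ with primeness of $Q$ gives $\lambda^*=0$ or $\gamma(x)[v,u]=0$. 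The case $\lambda=0$ makes $G(x)=\gamma(x)\in C$, so surjectivity of $G$ forces $R\subseteq C$, contradicting non-commutativity; otherwise the standard ``$R$ is not the union of the two proper additive subgroups $\{u:\gamma(u^*)=0\}$ and $\{u:[v,u]=0\ \forall v\}$'' argument, again using non-commutativity, forces $\gamma\equiv0$, whence $G(x)=\lambda^* x^*$.

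The main obstacle is purely the bookkeeping of the involution: obtaining the correct linear form $G(x)=\lambda^* x^*+\gamma(x)$ (rather than $\lambda x^*+\cdots$) by applying $*$ to $[G(x),x^*]=0$ before using Lemma~\ref{lem2.2}, and correctly pushing the substitution $y\mapsto x\circ y$ through the involution via $(x\circ y)^*=x^*\circ y^*$. Once $u=x^*,\,v=y^*$ is substituted, no new structural difficulty arises. I would also remark that, because $C$ is $*$-invariant, the conclusions ``$G(x)=\lambda^* x^*$ for some $\lambda\in C$'' and ``$G(x)=\mu x^*$ for some $\mu\in C$'' express the same family of maps; the conjugate is recorded only to indicate how $\lambda$ enters the argument.
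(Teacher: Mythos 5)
Your proposal is correct and follows essentially the same route as the paper: Lemma~\ref{lem3.4} to get $[G(x),x^*]=0$, involution to get $[G(x)^*,x]=0$, Lemma~\ref{lem2.2} applied to the additive map $x\mapsto G(x)^*$ to obtain $G(x)^*=\lambda x+\beta(x)$, then the linearize-and-substitute-$x\circ y$ computation leading to $(\lambda^2-\lambda)[[x^2,y],[x,y]]=0$ and the same case analysis. The only (immaterial) difference is that the paper carries out the commutator computation on the starred identity $[G(x)^*,y]+[G(y)^*,x]=0$ and converts to $G(x)=\lambda^*x^*$ only at the very end, whereas you convert to $G(x)=\lambda^*x^*+\gamma(x)$ first and compute with $u=x^*$, $v=y^*$; the two are related by applying the involution and yield identical conclusions.
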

\begin{proof}
Lemma (\ref{lem3.4}) gives
	\begin{equation}\label{eq3.10}
		[G(x), x^*] = 0  \: \textit{ for all } x \in R.             
	\end{equation}
	Applying involution in (\ref{eq3.10}), we get	
	\begin{equation}\label{eq3.11}
		[G(x)^*, x] = 0  \: \textit{ for all } x \in R.             
	\end{equation} 
	Linearizing (\ref{eq3.11}) gives
	\begin{equation}\label{eq3.12}
		[G(x)^*, y] + [G(y)^*, x] = 0  \: \textit{ for all } x, y \in R.            
	\end{equation}
	Substituting $x \circ y$ for $y$ in (\ref{eq3.12}) gives 
	\begin{equation*}
		[G(x)^*, x \circ y] + [G(x \circ y)^*, x] = 0  \: \textit{ for all } x, y \in R.                   
	\end{equation*}
	Thus,
	\begin{equation}\label{eq3.13}
		\begin{split}   
			0& = [G(x)^*, x \circ y] + [(G(x)G(y) + G(y)G(x))^*, x]\\& = [G(x)^*, x \circ y] + G(x)^*[G(y)^*, x] + [G(y)^*, x]G(x)^*   \: \textit{ for all } x, y \in R.                   
		\end{split}                     
	\end{equation}
	Since $*G$ is additive, then by (\ref{eq3.11}) and Lemma (\ref{lem2.2}), we find $G(x)^* = \lambda x + \beta(x)$ for all $x \in R$ where $\beta: R \longrightarrow C$ is an additive mapping. So, (\ref{eq3.13}) becomes
	\begin{equation}\label{eq3.14}
		\begin{split}   
			0& = [\lambda x + \beta(x), x \circ y] + (\lambda x + \beta(x))[\lambda y + \beta(y), x] + [\lambda y + \beta(y), x](\lambda x + \beta(x))\\&  \: \textit{ for all } x, y \in R.                   
		\end{split} 	
	\end{equation}	
We have $\lambda$ and  $\beta(x)$ for all $x \in R$ belong to $C$, then		
\begin{equation}\label{eq3.15}
		(\lambda^{2} - \lambda) [y, x^{2}] + 2\lambda \beta(x)[y, x] = 0\: \textit{ for all } x, y \in R.                                   	
\end{equation}	
So, we get $ (\lambda^{2} - \lambda) [[x^{2}, y], [y, x]] = 0$ for all  $x, y \in R$. As above, we find  either $\lambda^{2} = \lambda$ or $R$ is an order in a central simple algebra of dimension at most $4$ over its center. If $\lambda^{2} = \lambda$, then eqution (\ref{eq3.15}) implies that $\beta(x)[y, x] = 0$ for all $x, y \in R$. By the same manner as in Theorem (\ref{thm 3.5}) we arrive at  $\beta(x) = 0$ for all $x \in R$. Therefore $G(x)^* =\lambda x$ for all $x \in R$. The involution leads to $G(x) =\lambda^* x^*$ for all $x \in R$, where $ \lambda \in C$. 
\end{proof}


\begin{thebibliography}{99}
\bibitem{ref 11}
Ali, S., Dar, N. A.: On *-centralizing mappings in rings with involution, Georgian Math. J., 21(1), 25-28 (2014).
		
\bibitem{Bell 2016} Bell, H. E., Daif, M. N.: On centrally-extended maps on
rings, Beitr. Algebra Geom. 8, 129--136 (2016).

	

\bibitem{ref 14}
Bhushan, B.,  Sandhu, G. S., Ali, S.  and Kumaron, D.: Centrally extended Jordan derivations and related maps in rings. Hacettepe J. Math. Stat.  1-13 (2022).

\bibitem{ref 15}
Bhushan, B.,  Sandhu, G. S., and Kumaron, D.: Centrally extended generalized Jordan derivations. Ann. Uni. Paedagog. Crac. Stud. Math.  33-47, 22(2023).

\bibitem{ref 12}
Bresar, M.: Centralizing mappings and derivations in prime rings, J. Algebra, 156 (2), 385-394 (1993).

		
\bibitem{ref 13}
 Bresar, M.: Commuting traces of biadditive mappings, commutativity-preserving mappings and Lie mappings, Trans. Amer. Math. Soc., 335 (2), 525-546 (1993).
		

\bibitem{Deken and soufi 2020} El-Deken, S. F., El-Soufi, M. M.: On centrally-extended reverse and generalized reverse derivations, Indian Journal of Pure and Applied Mathematics, 51(3), pp.1165-1180 (2020)	
		
\bibitem{Deken 2019} El-Deken, S. F. Nabiel, H.: Centrally-extended generalized*-derivations on rings with involution, Beitr. Algebra Geom., 60(2), 217-224 (2019).



\bibitem{El Sofi 2022} El-Soufi, M. M.,  Ghareeb A.: Centrally Extended $\alpha$- Homoderivations on Prime and Semiprime Rings, Journal of Mathematics 2022, Article ID 2584177, 5 pages, (2022).



\bibitem{ezzat 2021} Ezzat, O. H.: Functional equations related to higher derivations in semiprime rings, Open Mathematics 19.1: 1359-1365, (2021).

\bibitem{ezzat 2023} Ezzat, O. H.: Rings with Centrally extended higher *derivation, Advances in Applied Cliffed Algebra 23(21), (2023).

\bibitem{Ageeb 2022} Tammam El-Sayiad, M. S, Ageeb, A., Khaled, A. M.: What is the action of a multiplicative centrally-extended derivation on a ring, Georgian Math. J., (2022).

\bibitem{El-Sayiad 2016} Tammam El-Sayiad, M. S., Muthana, N. M., Alkhamisi,
Z. S.: On centrally-extended maps on rings, Beitr. Algebra Geom. 8, 579--588
(2016).
\bibitem{ref 16} Melaibari, A., Muthana, N., AL-Kenani,  A.: Centrally-extended homoderivations on rings, Gulf j math, 4(2), 62-70, (2016).	

\bibitem{Khaled} Khaled, A. M.,  Ghareeb, A., Tammam El-Sayiad: Multiplicative generalized reverse *CE derivation acting on a ring with involution, Journal of Mathematics 2023, Article ID 2102909, 7 pages, (2023).
 
\end{thebibliography}
\end{document}